\newcommand{\K}[1]{\mathbb{#1}}
\newcommand{\PP}{\mathbb{P}}
\newtheorem{thm}{Theorem}
\newtheorem*{thm*}{Theorem}
\newtheorem{propo}[thm]{Proposition}
\newtheorem{lemma}[thm]{Lemma}
\newtheorem{cor}[thm]{Corollary}
\newtheorem{conjecture}{Conjecture}
\newtheorem{problem}{Problem}
\theoremstyle{definition}
\newtheorem{example}{Example}
\title[Rational functions with only real periodic points]{Rational functions with only real periodic points}
\author{Khazhgali Kozhasov}
\address{Technische Universit\"at Braunschweig, 38106 Braunschweig, Germany} 
\email{k.kozhasov@tu-braunschweig.de}
\author{Mario Kummer}
\address{Technische Universit\"at Berlin, D-10623 Berlin, Germany} 
\email{kummer@tu-berlin.de}
\thanks{Khazhgali Kozhasov, Technische Universit\"at Braunschweig, Germany (k.kozhasov@tu-braunschweig.de);}
\thanks{Mario Kummer (corresponding author), Technische Universit\"at Berlin, Germany (kummer@tu-berlin.de).}
\thanks{Mario Kummer has been supported by the DFG under Grant No.421473641.}
\newcommand{\comment}[1]{}
\begin{document}

\subjclass[2010]{Primary: 37F10, 26C10, 26C15; Keywords: Periodic points, rational functions, Julia set, interlacing, Chebyshev polynomials.}

\begin{abstract}
We study self-morphisms of smooth real projective algebraic curves that have only real periodic points. In the case of the projective line we provide a convenient characterization of such morphisms. We derive a semialgebraic description of the component of real fibered rational functions all of whose periodic points are real.
\end{abstract}
\maketitle

\section*{Introduction}

The study of iterates of rational functions takes a prominent place in discrete dynamical systems having strong connections to iterative methods, complex and arithmetic geometry. A well-known example is Newton's method \cite{HSS} that is used to find an approximate root of a polynomial $p\in \K{C}[z]$ by applying iterates of the rational function $f(z)=z-\frac{p(z)}{p^\prime(z)}$ to some initial guess $z_0\in \K{C}$. However, for particular $z_0$ the Newton's method fails to converge to a root of $p$. This happens, for example, when $z_0$ is a periodic point of $f$, that is, $z_0$ returns to itself under sufficiently many applications of $f$.

Any rational function $f\in \K{C}(z)_d$ of degree $d\geq 2$ is known to have infinitely many periodic points in $\K{C}$ \cite{Kalantari}. Northcott observed \cite{Northcott} that if $f\in \K{Q}(z)_d$ is defined over the field of rational numbers, then it can have only finitely many periodic points in $\K{Q}$. One of the main open problems in arithmetic dynamics is the uniform boundedness conjecture \cite{MS} asserting that the number of rational periodic points of $f\in \K{Q}(z)_d$ is uniformly bounded by a constant depending only on the degree $d$ of $f$. Remarkably, this problem remains open even for $d=2$, see \cite{survey}. When the field is relaxed to $\K{R}$ both of the above ``regimes'' are possible: some real rational functions $f\in \K{R}(z)_d$ of degree $d\geq 2$ have infinitely many periodic points in $\K{R}$ and some have only finitely many. Moreover, there exist rational functions of any given degree whose all periodic points are real, see \cite[\S 16, 17]{fatou} and \cite[Exp.~2.2]{dynamics}.

A function $f\in \K{R}(z)_d$ has only real periodic points if, by definition, for all $k\geq 1$ the equation $f^k(z)=z$ has only real solutions $z\in \overline{\K{R}}$, where $f^k$ denotes the $k$th iterate of $f$. In Theorem \ref{thm:main} we provide an equivalent and potentially easier way to certify this property. In this result we exploit a characterization of rational functions with real Julia sets by Eremenko and van Strien \cite{realmult}. Real fibered rational functions constitute  a distinguished class of such functions, they map only real points to real points. Dynamical properties of this class of functions were studied by Fatou \cite{fatou}. Using his results, in Theorem \ref{thm:RF} we provide a semialgebraic description of the component of real fibered rational functions that have only real periodic points. To illustrate Theorem \ref{thm:main}, in Corollaries \ref{cor_odd} and \ref{cor_even} we give sufficient conditions for a polynomial to have only real periodic points, with Chebyshev polynomials being a classical example. We conjecture that this property is also shared by the classical Hermite polynomials, see Example \ref{Hermite} and Conjecture \ref{conj}.

Rational functions are morphisms of the projective line. In Propositions \ref{g=1} and \ref{g>1} we study self-morphisms of a real algebraic curve of higher genus, it turns out that realness of periodic points can be understood rather easily in this case. Another generalization would be to consider self-morphisms of the projective space of higher dimension and study realness of their periodic points, see Problem \ref{problem}. 
\section{Main results}

\underline{A rational function of degree $d$} is a ratio $f=\frac{p}{q}$ of two polynomials $p=p_dz^d+\dots+p_1z+p_0$, $q=q_dz^d+\dots +q_1z+q_0\in \K{C}[z]$ that have no common factors and such that $p_d$ and $q_d$ are not simultaneously zero. The polynomials $p$ and $q$ are defined uniquely up to a non-zero common factor. Thus, the set $\K{C}(z)_{d}$ of rational functions of degree $d$ is identified with the complement of a hypersurface in \underline{the projective $(2d+1)$-space} $\PP^{2d+1}=\{(p_d,\dots,p_1,p_0,q_d,\dots,q_1,q_0)\in \K{C}^{2d+2}\setminus \{0\}\}/\K{C}^*$. 
Any rational function $f=\frac{p}{q}$ defines a continuous map $z\mapsto \frac{p(z)}{q(z)}$ from \underline{the Riemann sphere} $\overline{\K{C}} =\K{C}\cup \{\infty\}$ into itself that sends $\infty$ to $\lim_{\,z\rightarrow \infty} \frac{p(z)}{q(z)} \in \overline{\K{C}}$ and zeros of $q$ to $\infty$.

A point $z\in \overline{\K{C}}$ is called \underline{periodic} for a rational function $f$ if $f^k(z)=z$ for some integer $k\geq 1$, where one defines $f^1=f$ and $f^{k+1}=f^k\circ f$. In particular, if $f(z)=z$, then $z\in \overline{\K{C}}$ is called a \underline{fixed point} of $f$. Thus, a periodic point of $f$ is a fixed point of $f^k$ for some $k\geq 1$ and the smallest such $k$ is called the \underline{period} of $z$. If $z\in \overline{\K{C}}$ is a periodic point of $f\in \K{C}(z)_d$ of period $k$, the \underline{multiplier} of $z$ is defined to be  $(f^k)^\prime(z)$ if $z\in \K{C}$ and $\left(\lim_{\,w\rightarrow \infty} (f^k)^{\prime}(w)\right)^{-1}$ if $z=\infty$. The multiplier of any point in the cycle $\{z,f(z),\dots,f^{k-1}(z)\}$ of $z$ is the same (see, e.g., \cite[\S 4]{milnor}). A periodic point $z\in \overline{\K{C}}$ is called \underline{attracting}, \underline{indifferent} or \underline{repelling} if its multiplier $\lambda_z$ satisfies $\vert\lambda_z\vert<1$, $\vert \lambda_z\vert=1$ or $\vert\lambda_z\vert>1$ respectively. Note that when $f\in \K{R}(z)_d$ is a real rational function the multiplier of every real periodic point of $f$ is a real number. The \underline{Julia set} $J(f)$ of $f\in \K{C}(z)_d$ is defined to be the closure of the set of repelling periodic points of $f$.

In \cite{realmult} Eremenko and van Strien characterized real rational functions $f\in \K{R}(z)_d$ whose Julia set is real, i.e., $J(f)\subset \overline{\K{R}}$, proving the following result.

\begin{thm}[\cite{realmult}]\label{thm:realjulia}
 Let $f\in\K{R}(z)_d$ be a rational function whose Julia set is real. Then there are the following possibilities:
 \begin{itemize}
  \item[(i)] $f$ is real fibered in the sense that $z\in\overline{\K{R}}\Leftrightarrow f(z)\in\overline{\K{R}}$ for all $z\in\overline{\K{C}}$.
 \end{itemize}
 If $(i)$ does not hold, then there is a real critical point and a fixed point $x_0\in\K{R}$ with $f'(x_0)\in[-1,1]$. Let $I\subset\overline{\K{R}}$ be the smallest closed interval which contains $J(f)$ and whose interior does not contain $x_0$. Then one of the following holds:
 \begin{itemize}
  \item[(ii)] $I\subsetneq\overline{\K{R}}$ and $z\in I\Leftrightarrow f(z)\in I$ for all $z\in\overline{\K{C}}$.
  \item[(iii)] $f(I)$ strictly contains $I$.
 \end{itemize}
\end{thm}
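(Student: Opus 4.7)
The plan is to analyze the full preimage $f^{-1}(\overline{\K{R}})$ together with the Fatou components adjacent to the real axis, exploiting the complete invariance of $J(f)\subset\overline{\K{R}}$ under $f$. The easy branch of the dichotomy is case (i). Since $f\in\K{R}(z)_d$, the set $f^{-1}(\overline{\K{R}})$ is a real algebraic $1$-cycle on the Riemann sphere whose only singular points are critical points of $f$ with real critical value. If no critical value of $f$ is real, then $f^{-1}(\overline{\K{R}})$ is a disjoint union of smooth topological circles and $f$ restricts to an unramified $d$-fold covering onto $\overline{\K{R}}$; since $\overline{\K{R}}$ is already one such circle and the cover has $d$ sheets, $f^{-1}(\overline{\K{R}})=\overline{\K{R}}$, and $f$ is real fibered.

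Now assume (i) fails. Then at least one critical value of $f$, and by the real structure at least one critical point, lies on $\overline{\K{R}}$. The two open half-planes $\K{H}^\pm=\{z\in\K{C}\colon \pm\Im(z)>0\}$ lie in the Fatou set of $f$, and the Fatou components $U^\pm\supset\K{H}^\pm$ are forced to spill across an open subinterval of $\overline{\K{R}}\setminus J(f)$. Applying Sullivan's no-wandering-domains theorem together with Fatou's classification of periodic components, and using that complex conjugation sends $U^+$ to $U^-$, I would extract a real fixed point $x_0\in\K{R}$ inside the periodic cluster of $U^\pm$ with $f'(x_0)\in[-1,1]$. Given $x_0$, the interval $I$ is well defined; its endpoints are forced by minimality to lie in $J(f)$ or to be real critical values of $f$, and cases (ii) versus (iii) are separated by whether $f(I)\subseteq I$ or $f(I)\supsetneq I$. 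In case (ii) the full equivalence $z\in I\Leftrightarrow f(z)\in I$ would follow because any non-real preimage of $\mathrm{int}(I)$ would propagate $J(f)$ off the real axis, contradicting $J(f)\subset\overline{\K{R}}$.

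The main obstacle I expect is producing the fixed point $x_0$ with the sharp signed bound $f'(x_0)\in[-1,1]$, not merely $|f'(x_0)|\leq 1$. In particular one has to rule out a Herman-ring component of the Fatou set, whose real trace would be an invariant circle on which $f$ acts as an irrational rotation, and one has to show that the real multiplier of the chosen fixed point actually lies in $[-1,1]$ rather than being a non-real Siegel eigenvalue on the unit circle. This is the technical heart of the Eremenko--van Strien argument and should rely on a delicate analysis of real multipliers along symmetric cycles of Fatou components.
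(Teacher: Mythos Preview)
The paper does not prove this theorem at all: it is quoted verbatim from Eremenko and van Strien \cite{realmult} (note the citation in the theorem header) and is used only as a black-box input to the proof of Theorem~\ref{thm:main}. So there is no ``paper's own proof'' to compare your proposal against.

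As for the proposal itself, it is a plausible outline of the Eremenko--van Strien argument rather than a proof. Your treatment of case~(i) is not quite right: real fiberedness does \emph{not} require that $f$ have no real critical values, so the dichotomy ``no real critical value $\Rightarrow$ (i), otherwise (ii)/(iii)'' is false as stated. For the remaining cases you have correctly located the real difficulty --- producing the fixed point $x_0$ with $f'(x_0)\in[-1,1]$ and excluding Herman rings and nonreal Siegel multipliers --- but you have only named it, not resolved it. If you actually want to prove this result you should consult \cite{realmult} directly; for the purposes of the present paper no proof is needed.
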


Using this result we characterize rational functions that have only real periodic points.

\begin{thm}\label{thm:main}
 Let $f\in\K{R}(z)_d$ with $d\geq2$. Then the following are equivalent:
 \begin{enumerate}
  \item $f$ has only real periodic points;
  \item there is a nonrepelling real cycle of length at most $2$ and a closed semialgebraic subset $S\subset\overline{\K{R}}$ that contains a nonattracting fixed point such that $f^{-1}(S)\subset S$.
 \end{enumerate}
 If $(1)$ and $(2)$ hold, then $S$ contains all nonattracting periodic points of $f$.
\end{thm}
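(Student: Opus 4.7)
The plan is to apply Theorem~\ref{thm:realjulia} in both directions, combined with a backward-orbit density argument and a classification of Fatou components.

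For $(2) \Rightarrow (1)$, I would first show that $J(f) \subset S$. Iterating $f^{-1}(S) \subset S$ gives $f^{-n}(S) \subset S$ for all $n$; the set $S$ cannot be finite, because then the infinite Julia set, which is the closure of the backward orbit of any non-exceptional point of $S$, would have to fit inside $S$. Hence $S$ has nonempty interior in $\overline{\K{R}}$, contains a non-exceptional point, and the density of backward orbits in $J(f)$ yields $J(f) \subset S \subset \overline{\K{R}}$, so Theorem~\ref{thm:realjulia} applies. In case (i) the Fatou set consists of the two open half planes; the nonrepelling real cycle of length at most $2$ supplied by (2) must be parabolic (attracting cycles would lie in the Fatou set and therefore be nonreal, Siegel centers carry irrational multipliers and are nonreal, and Herman rings carry no periodic points), and its parabolic petals account for both Fatou components, leaving no room for a complex nonrepelling cycle. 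In cases (ii) and (iii), $J(f)$ is a proper closed subset of the great circle $\overline{\K{R}}$, so $\overline{\K{C}} \setminus J(f)$ is a single Fatou component $U$ fixed by $f$; the nonrepelling cycle supplied by (2) must then have length $1$ and is either attracting with immediate basin $U$ or parabolic on $\partial U$, again ruling out Siegel and Herman. In every case, every nonrepelling cycle is real, and combined with $J(f) \subset \overline{\K{R}}$ this gives (1). The final assertion follows since a nonattracting periodic point is either repelling (hence in $J(f) \subset S$) or indifferent; under (1) an indifferent point cannot be a Siegel center (its multiplier $e^{2\pi i\alpha}$ with irrational $\alpha$ would be nonreal), so it is parabolic and also in $J(f) \subset S$.

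For $(1) \Rightarrow (2)$, assumption (1) yields $J(f) \subset \overline{\K{R}}$ and Theorem~\ref{thm:realjulia} applies. I would take $S = \overline{\K{R}}$ in case (i), where $f^{-1}(\overline{\K{R}}) = \overline{\K{R}}$ by real-fiberedness, and $S = I$ in cases (ii) and (iii). In case (ii) the inclusion $f^{-1}(I) \subset I$ is built into the definition, while in case (iii) it must be verified separately, using that under (1) the unique Fatou component is the basin of the real nonrepelling fixed point $x_0$, so no complex preimage of $I$ can exist and the real preimages of $I$ remain in $I$ by the minimality in the definition of $I$. The required nonrepelling real cycle of length at most $2$ is supplied in case (i) by the parabolic cycle associated with the pair of half-plane Fatou components (which $f$ either fixes or swaps, giving period $1$ or $2$), and in cases (ii) and (iii) by the fixed point $x_0$ itself. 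A nonattracting fixed point in $S$ is produced by the holomorphic fixed point formula $\sum 1/(1-\lambda_i) = 1$, summed over the fixed points of $f$ with $\lambda_i \neq 1$: since $\Re(1/(1-\lambda)) > 1/2$ whenever $|\lambda| < 1$, not all of the $d + 1 \geq 3$ fixed points can be attracting, and any nonattracting one lies in $J(f) \subset S$ by the Siegel-exclusion argument above.

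The main obstacle I anticipate is case (iii) of Theorem~\ref{thm:realjulia} in the forward direction, where $I$ is not forward-invariant and the backward invariance $f^{-1}(I) \subset I$ must be derived from the dynamics of $f$ on $\overline{\K{C}} \setminus \overline{\K{R}}$ rather than from the definition of $I$ alone; the crucial input is that, under (1), the unique Fatou component is the basin of a real nonrepelling cycle, which tightly constrains how complex orbits can return to the real line.
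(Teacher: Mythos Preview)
There are two genuine gaps.

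First, your case split via Theorem~\ref{thm:realjulia} does not match the relevant dichotomy. Case~(i) there says only that $f$ is real fibered; it does \emph{not} force $J(f)=\overline{\K{R}}$. A real fibered map can perfectly well have $J(f)\subsetneq\overline{\K{R}}$ (see Example~\ref{RF}: interlacing numerator and denominator with an attracting real fixed point), and then the Fatou set is one connected piece, not the two half-planes. So your case-(i) argument in both directions, which rests on ``the Fatou set consists of the two open half planes'' and on ``the parabolic cycle associated with the pair of half-plane Fatou components'', simply does not cover all of case~(i). The paper avoids this by splitting $(2)\Rightarrow(1)$ according to whether $J(f)=\overline{\K{R}}$, $J(f)\subsetneq\overline{\K{R}}$ is disconnected, or $J(f)\subsetneq\overline{\K{R}}$ is an interval, invoking respectively Lemma~\ref{lem:blaschke}, Lemma~\ref{lem:oneattr}, and an explicit normal form of Fatou. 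Your Fatou-component classification is a legitimate alternative for the situation $J(f)\subsetneq\overline{\K{R}}$, but you must apply it whenever $J(f)\subsetneq\overline{\K{R}}$, including inside case~(i), and treat $J(f)=\overline{\K{R}}$ separately.

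Second, in $(1)\Rightarrow(2)$ your choice $S=I$ in case~(iii) is precisely the obstacle you flag, and the sketch you offer does not close it: ``no complex preimage of $I$ can exist'' does not follow from~(1), since outside case~(i) the map $f$ is not real fibered and real points do have nonreal preimages; and minimality of $I$ gives no control on $f^{-1}(I)$ when $f(I)\supsetneq I$. The paper does not try to use $I$ here at all: for the non-real-fibered cases it cites \cite[p.~6454]{realmult}, where a \emph{finite union of closed intervals} $S\supset J(f)$ with $f^{-1}(S)\subset S$ is constructed, and in case~(i) it extracts the nonrepelling real cycle of length at most~$2$ and the nonattracting fixed point from Fatou's classical analysis of self-maps of a half-plane rather than from a classification of Fatou components.
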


In the following let us denote by $\mathscr{R}_d$ the set of $f\in \K{R}(z)_d$ with only real periodic points.
We now derive several corollaries of Theorem \ref{thm:main}. Let us first consider an example of a real fibered rational function in $\mathscr{R}_d$.

\begin{example}\label{RF}
 Let $f=\frac{p}{q}\in\K{R}(z)_d$, $d\geq2$, with $p$ and $q$ polynomials with strictly interlacing zeros. Assume that $f$ has an attracting real fixed point. Then $f$ has only real fixed points by \Cref{thm:main}. The same is true for any $g$ in a  small open neighbourhood $U\subset\K{R}(z)_d$ of $f$. Thus $f$ is in the interior of $\mathscr{R}_d$. We obtain such an $f$, for example, if $\deg(p)>\deg(q)$ and the leading coefficient of $p$ is larger than the one of $q$. In that case $f$ has an attracting fixed point at infinity.
\end{example}

In fact the only real fibered rational functions that lie in the interior of $\mathscr{R}_d$ are those described in Example \ref{RF} as we show in the following Theorem.

\begin{thm}\label{thm:RF}
A real fibered rational function $f\in \K{R}(z)_d$, $d\geq 2$, is in $\mathscr{R}_d$ if and only if $f^2$ has only real fixed points. In particular, such functions form a closed semialgebraic set in $\mathscr{R}_d\subset \K{R}(z)_d$ and its boundary consists of real fibered $f\in \K{R}(z)_d$ with an indifferent fixed point.
\end{thm}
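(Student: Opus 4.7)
The forward implication is immediate: fixed points of $f^2$ are periodic points of $f$ of period at most two. For the converse, the plan is to apply \Cref{thm:main}. Real-fiberedness yields $f^{-1}(\overline{\K{R}})=\overline{\K{R}}$, so taking $S=\overline{\K{R}}$ automatically satisfies $f^{-1}(S)\subset S$; the holomorphic fixed point formula $\sum_i 1/(1-\lambda_i)=1$ over the $d+1$ (real) fixed points of $f$ rules out all $\lambda_i$ lying in $(-1,1)$, since each summand would then exceed $1/2$ and the sum would exceed $(d+1)/2>1$ for $d\ge 2$. Hence $S$ contains a nonattracting fixed point and condition~(2) holds. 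For condition~(1), observe that a real fibered $f$ has no real critical points, so $f|_{\overline{\K{R}}}\colon S^1\to S^1$ is an unbranched smooth covering of degree $\pm d$, and $f^2|_{S^1}$ is an orientation-preserving degree-$d^2$ covering with Lefschetz number $1-d^2$. All $d^2+1$ fixed points of $f^2$ lie on $S^1$ by hypothesis; if any has multiplier $1$ it is a nonrepelling periodic point of $f$ of period $\le 2$ and we are done. Otherwise every fixed point is non-degenerate with index $\pm 1$, and the equations $p+n=d^2+1$, $p-n=1-d^2$ force exactly one with $(f^2)'(x)\in[0,1)$, which is either an attracting fixed point of $f$ or an attracting $2$-cycle. \Cref{thm:main} then gives $f\in\mathscr{R}_d$.

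For the closed semialgebraic claim, real-fiberedness is equivalent to the real-rootedness of $p(z)-wq(z)$ for every real $w$, a universally quantified statement that is closed and semialgebraic by subresultant sign patterns together with Tarski--Seidenberg; the condition ``$\mathrm{num}(f^2(z)-z)$ has only real roots'' is similarly closed and semialgebraic. Their intersection inherits both properties and, by the equivalence just proved, coincides with the set of real fibered functions in $\mathscr{R}_d$. A point of this set lies on the boundary exactly when $\mathrm{num}(f^2(z)-z)$ has a multiple real root, i.e.\ when $f^2$ has an indifferent fixed point; this indifference can come either from an indifferent fixed point of $f$ (multiplier $\pm 1$) or, a priori, from an indifferent $2$-cycle of $f$ (multiplier $1$).

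The main obstacle is to exclude this second alternative occurring in isolation. The plan is to rerun the Lefschetz index count on $S^1$ for $f^2$ at a would-be boundary point where all fixed points of $f$ are non-degenerate and only one $2$-cycle is indifferent, with generic multiplicity two at each of its two points (index $0$). The $d+1$ fixed points of $f$ then contribute $1-d$ to the index sum (the same counting as in condition~(1)), the remaining $d(d-1)/2-1$ $2$-cycles are repelling by the interior analysis and contribute $-d(d-1)+2$, and the degenerate cycle contributes $0$; the total $3-d^2$ fails to equal the Lefschetz number $1-d^2$, ruling out this configuration for $d\ge 2$. An analogous bookkeeping excludes two repelling $2$-cycles merging into a single doubled one without any fixed-point indifference. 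Consequently every boundary point has an indifferent fixed point of $f$, appearing as a tangent bifurcation ($f'(x)=1$) in the orientation-preserving case and as period-doubling ($f'(x)=-1$) in the orientation-reversing case.
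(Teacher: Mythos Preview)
Your proof of the equivalence is correct and takes a genuinely different route from the paper. You feed both hypotheses of \Cref{thm:main} by hand: the set $S=\overline{\K{R}}$ with a nonattracting fixed point via the rational fixed point formula, and the nonrepelling period-$\leq 2$ cycle via a Lefschetz count for the covering $f^2|_{S^1}$. The paper instead bypasses \Cref{thm:main} entirely: since $g=f^2$ maps the upper half-plane to itself, Fatou \S16 gives a nonrepelling real fixed point of $g$, and the same citation applied to $g^k$ shows every even iterate of $f$ has only real fixed points (odd iterates follow by doubling). Your argument trades a single classical citation for a topological index computation; both are short.

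The boundary analysis has two real gaps. First, your assertion that a point of the set lies on the boundary \emph{exactly} when $\operatorname{num}(f^2(z)-z)$ has a multiple root is only half justified: simple roots do give interior points, but you never show that a multiple root can be perturbed, \emph{within the real fibered locus}, to produce a nonreal fixed point of $f^2$. This is the substantive content of the boundary claim and is not automatic, since real-fiberedness constrains the available deformations. The paper carries this out by explicit one-parameter perturbations, treating $f'(x_0)=1$ (shift $p\mapsto p+\varepsilon$) and $f'(x_0)=-1$ (scale $f\mapsto(1+\varepsilon)f$ and compare the graph of $f_\varepsilon^2$ to that of $f^2$ near $x_0$) separately.

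Second, the Lefschetz bookkeeping you use to exclude an isolated indifferent $2$-cycle is internally inconsistent. If the indifferent cycle has multiplicity two at each of its two points it absorbs four of the $d^2+1$ fixed points of $f^2$, so only $(d^2-d-4)/2$ other $2$-cycles remain, not $d(d-1)/2-1$; and ``the $d+1$ fixed points of $f$ contribute $1-d$'' presumes exactly one is attracting, which your earlier count (about all fixed points of $f^2$, not those of $f$) does not establish. A clean version of your idea does work: with $p+n=d^2-3$ simple fixed points and $p-n=1-d^2$ one gets $p=-1$, a contradiction. But the paper's argument is a single line: if an indifferent fixed point $z_0$ of $f^2$ were not fixed by $f$, then $z_0$ and $f(z_0)$ would be two distinct indifferent real fixed points of $f^2$, contradicting Fatou \S16, which allows at most one.
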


Next we would like to give sufficient conditions for a polynomial to be in $\mathscr{R}_d$.
\begin{cor}\label{cor_odd}
 Let $f\in\K{R}[x]$ be a polynomial of odd degree $2m+1\geq3$ with positive leading coefficient such that $f'$ has only real and simple zeros $z_1<\cdots< z_{2m}$. Let $x_0<z_1$ and $x_1>z_{2m}$ be fixed points of $f$ such that $f(z_{2i})\leq x_0$ and $f(z_{2i-1})\geq x_1$ for all $i=1,\ldots,m$. Then $f$ has only real periodic points.
\end{cor}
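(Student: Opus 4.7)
The plan is to apply \Cref{thm:main}\,(2) with the semialgebraic set $S=[x_0,x_1]\subset\overline{\K{R}}$, using the superattracting fixed point at $\infty$ (whose multiplier equals $0$ since $f$ is a polynomial of degree $d=2m+1\geq 3$) as the required nonrepelling real cycle of length at most two. What remains is to show that $f^{-1}(S)\subset S$ and that $S$ contains a nonattracting fixed point of $f$; the conclusion will then follow from the implication (2)$\Rightarrow$(1) of \Cref{thm:main}.

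For the invariance $f^{-1}(S)\subset S$, I would first observe that $f'$ is a polynomial of degree $2m$ with positive leading coefficient whose only real zeros are the simple ones $z_1<\cdots<z_{2m}$, and hence $f'>0$ on $(-\infty,z_1)\cup(z_{2m},+\infty)$. Thus $f$ is strictly increasing on the intervals $(-\infty,z_1)$ and $(z_{2m},+\infty)$ containing $x_0$ and $x_1$ respectively, so $x<x_0$ forces $f(x)<f(x_0)=x_0$ while $x>x_1$ forces $f(x)>f(x_1)=x_1$. Together with $f(\infty)=\infty$ this yields $f(\overline{\K{R}}\setminus S)\cap S=\emptyset$, which is equivalent to $f^{-1}(S)\subset S$.

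For the existence of a nonattracting fixed point in $S$, I would count the real fixed points of $f$ via the intermediate value theorem. The hypotheses give $f(z_{2i-1})-z_{2i-1}\geq x_1-z_{2i-1}>0$ and $f(z_{2i})-z_{2i}\leq x_0-z_{2i}<0$, so $f(x)-x$ alternates sign at $z_1,z_2,\ldots,z_{2m}$, producing at least $2m-1$ fixed points of $f$ in the open intervals $(z_i,z_{i+1})$. Together with $x_0$ and $x_1$, this exhibits $2m+1$ distinct real roots of $f(x)-x$, a polynomial of degree exactly $2m+1$; hence these are all of its roots, each simple, and in particular $x_1$ is the largest real fixed point of $f$. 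Since $f(x)-x\to+\infty$ as $x\to+\infty$, we have $f(x)>x$ for every $x>x_1$; because $x_1$ is a simple root of $f-\mathrm{id}$, this forces $f'(x_1)>1$, so $x_1\in S$ is a repelling, hence nonattracting, fixed point. All hypotheses of \Cref{thm:main}\,(2) are therefore satisfied, and $f$ has only real periodic points. The only delicate point in the argument is the exact count of $2m+1$ fixed points, which crucially uses that $\deg(f-\mathrm{id})=2m+1$ and that $z_1,\dots,z_{2m}\in(x_0,x_1)$.
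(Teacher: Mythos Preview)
Your overall strategy matches the paper's: take $S=[x_0,x_1]$, use the superattracting fixed point at $\infty$ as the nonrepelling cycle, and verify the hypotheses of \Cref{thm:main}\,(2). However, your verification of $f^{-1}(S)\subset S$ has a genuine gap.

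You argue that $f$ is increasing on $(-\infty,z_1)$ and on $(z_{2m},+\infty)$, conclude that $f(\overline{\K{R}}\setminus S)\cap S=\emptyset$, and then assert that this is ``equivalent to $f^{-1}(S)\subset S$''. It is not. The condition $f^{-1}(S)\subset S$ in \Cref{thm:main} refers to the full preimage in $\overline{\K{C}}$, as is clear from its proof (one needs the iterated complex preimages of the nonattracting fixed point to stay in $S$ in order to trap the Julia set). Your argument only shows that \emph{real} points outside $[x_0,x_1]$ do not map into $[x_0,x_1]$; it says nothing about nonreal preimages, and a polynomial with real coefficients can certainly send nonreal points to real values. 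The paper closes exactly this gap: for each $y\in[x_0,x_1]$ one uses the sign pattern $f(x_0)=x_0\leq y$, $f(z_1)\geq x_1\geq y$, $f(z_2)\leq x_0\leq y$, \ldots, $f(z_{2m})\leq x_0\leq y$, $f(x_1)=x_1\geq y$ together with the intermediate value theorem to produce $2m+1$ real solutions of $f(x)=y$ inside $[x_0,x_1]$. Since $\deg f=2m+1$, these are all the preimages of $y$, hence $f^{-1}(S)\subset S$ in the required complex sense. Ironically, this is precisely the counting argument you carry out later for $y$ a fixed point; you only needed to run it for arbitrary $y\in[x_0,x_1]$.

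Your treatment of the nonattracting fixed point in $S$ is correct (and in fact more detailed than the paper, which simply asserts that $x_0$ and $x_1$ are nonattracting).
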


\begin{proof}
  For any $y\in[x_0,x_1]$ the polynomial $f(x)-y$ has $2m+1$ real zeros in the interval $[x_0,x_1]$ by the intermediate value theorem. Thus $f^{-1}([x_0,x_1])\subset[x_0,x_1]$. Furthermore, $f$ has an attracting fixed point at infinity. Finally, the fixed points $x_0, x_1\in [x_0,x_1]$ are nonattracting and the claim follows from Theorem \ref{thm:main}.
\end{proof}
A similar criterion holds for polynomials of even degree.

\begin{cor}\label{cor_even}
  Let $f\in \K{R}[x]$ be a polynomial of even degree $2m\geq 2$ with positive leading coefficient such that the derivative $f^\prime$ has only real and simple zeros $z_1<\dots<z_{2m-1}$. Let $x_0<z_1$ be such that $f(z_{2i-1})\leq x_0$ for all $i=1,\dots,m$ and let $x_1>z_{2m-1}$ be a fixed point of $f$ such that $f(z_{2i})\geq x_1$ for all $i=1,\dots,m-1$. Then $f$ has only real periodic points. 
\end{cor}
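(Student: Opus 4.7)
The plan is to mimic the proof of Corollary~\ref{cor_odd} by applying Theorem~\ref{thm:main} with $S=[x_0,x_1]$, taking the attracting fixed point of $f$ at $\infty$ (automatic for a polynomial of degree $\geq 2$ with positive leading coefficient) as the required nonrepelling real cycle of length $\leq 2$. The main task is to verify the backward invariance $f^{-1}([x_0,x_1])\subset[x_0,x_1]$, i.e.\ that every $y\in[x_0,x_1]$ has all $2m$ of its complex $f$-preimages real and contained in $[x_0,x_1]$.

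To show this I would evaluate $f(x)-y$ at the $2m+1$ consecutive points $x_0<z_1<z_2<\cdots<z_{2m-1}<x_1$ and check that the signs alternate. The hypotheses give $f(z_{2i-1})-y\leq x_0-y\leq 0$ at the local minima, $f(z_{2i})-y\geq x_1-y\geq 0$ at the interior local maxima (for $i=1,\dots,m-1$), and $f(x_1)-y=x_1-y\geq 0$ at the fixed point $x_1$. Combined with $f(x_0)-y\geq 0$ at the left endpoint (see below), this produces the alternating pattern $+,-,+,-,\dots,-,+$, and the intermediate value theorem then supplies one real zero in each of the $2m$ consecutive subintervals; since $\deg(f(x)-y)=2m$ these exhaust all its complex zeros and they all lie in $[x_0,x_1]$.

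With backward invariance in hand, it remains to note that $x_1\in S$ is a nonattracting fixed point of $f$: as the largest real root of $f(x)-x$, and because $f(x)-x\to+\infty$ as $x\to+\infty$, one must have $f'(x_1)\geq 1$. Theorem~\ref{thm:main} then concludes.

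The one delicate point is ensuring the sign $f(x_0)\geq x_1$, which is not explicit in the stated hypotheses. I would resolve it by choosing $x_0$ to be the unique leftmost real preimage $a\in(-\infty,z_1)$ of $x_1$ under $f$, which always exists because $f$ strictly decreases from $+\infty$ to $f(z_1)\leq x_0<x_1$ on $(-\infty,z_1]$; then $f(x_0)=x_1$ by construction, so the left-endpoint sign works automatically. Checking that this refined choice $x_0=a$ still satisfies $f(z_{2i-1})\leq x_0$ for each $i$ is where I expect the main care of the argument to concentrate.
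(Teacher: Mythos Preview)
Your approach is exactly the paper's: apply Theorem~\ref{thm:main} with $S=[x_0,x_1]$, the attracting fixed point at $\infty$, and the nonattracting fixed point $x_1\in S$, after verifying $f^{-1}([x_0,x_1])\subset[x_0,x_1]$ via the intermediate value theorem. The paper's proof simply asserts the backward invariance; you have been more careful and isolated the one nontrivial sign, namely $f(x_0)-y\geq0$, which requires $f(x_0)\geq x_1$.

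The difficulty you flag is genuine, and your proposed repair cannot succeed---not because of a missing trick, but because the corollary as stated is false. Take $m=1$ and $f(x)=x^2-1$. Then $z_1=0$, $f(z_1)=-1$, so any $x_0\in[-1,0)$ satisfies $f(z_1)\leq x_0$; and $x_1=\tfrac{1+\sqrt5}{2}$ is a fixed point with $x_1>z_1$, while the condition on $f(z_{2i})$ is vacuous. All hypotheses hold, yet $f$ has non-real periodic points: dividing $f^3(x)-x$ by $f(x)-x$ gives $x^6+x^5-2x^4-x^3+x^2+1=(x^3-x)^2+x^2(x^3-x)+1$, which is strictly positive on $\K{R}$ (equivalently, the Julia set of $z^2-1$ is the basilica, not contained in $\overline{\K{R}}$). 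In your fix this manifests as $a=-x_1<-1=f(z_1)$, so the inequality $f(z_1)\leq a$ you worried about does fail.

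The statement becomes correct, and both your argument and the paper's go through verbatim, under the extra hypothesis $f(x_0)\geq x_1$. This is precisely what holds in the paper's applications: $T_{2m}(-1)=1=x_1$, and for $H_4$ the chosen $x_0\approx-1.663\approx-x_1$ satisfies $H_4(x_0)=H_4(x_1)=x_1$ by evenness.
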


\begin{proof}
  For any $y\in [x_0,x_1]$ the polynomial $f(x)-y$ has $2m$ real zeros in $[x_0,x_1]$ by the intermediate value theorem and hence $f^{-1}([x_0,x_1])\subset [x_0,x_1]$. The claim follows from Theorem \ref{thm:main}, since $f$ has an attracting fixed point at infinity and the fixed point $x_1\in [x_0,x_1]$ is nonattracting.
\end{proof}

\begin{example}\label{Cheb}
The \underline{Chebyshev polynomials of the first kind} defined by $T_d(z) = \cos(d\arccos z)$, $z\in [-1,1]$, are known to have only real periodic points, see \cite[Exp.~2.2]{dynamics}. This also follows from Corollaries \ref{cor_odd} and \ref{cor_even} with $x_0=-1$, $x_1=1$. 
\end{example}

\begin{example}
 Unlike in the case of a real fibered rational function, a polynomial $f$ can lie on the boundary of $\mathscr{R}_d$ although the equation $f^k(z)=z$ has only simple real solutions for all $k$. Indeed, let $f(z)=T_2(z)=2z^2-1$ be the Chebyshev polynomials of degree $2$. Then $f^k=T_{d^k}$ has only real periodic points none of which is indifferent. On the other hand, for all sufficiently small $\varepsilon>0$ we consider $f_\varepsilon(z)=(2-\varepsilon)z^2+\varepsilon-1$. Since $1$ is a repelling fixed point of $f_\varepsilon$, it is contained in the Julia set. Since the Julia set is backward closed and $f_{\varepsilon}(-1)=1$, also $-1$ is in the Julia set. By the same reason $f^{-1}(-1)\nsubseteq\K{R}$ is contained in the Julia set. Therefore, there must be nonreal (repelling) periodic points of $f_\varepsilon$. This shows that $f=f_0$ is on the boundary of $\mathscr{R}_d$.
\end{example}

The \underline{Hermite polynomials} defined by $H_d(z)=(-1)^de^{z^2}\frac{\mathrm{d}^d}{\mathrm{d}z^d} e^{-z^2}$ seem to also have only real periodic points as the following example shows.
\begin{example}\label{Hermite}
  Consider the cubic Hermite polynomial $H_3(z)=8z^3-12z$. The zeros of $H_3'(z)$ are $-\frac{1}{2}\sqrt{2},\frac{1}{2}\sqrt{2}$. The critical values of $H_3$ are $4\sqrt{2},-4\sqrt{2}$ respectively. Finally $-\sqrt{\frac{13}{8}}, \sqrt{\frac{13}{8}}$ are fixed points that satisfy the assumptions of Corollary \ref{cor_odd}. Thus $H_3$ has only real periodic points.
  Similarly, the quartic Hermite polynomial $H_4(z)=16 z^4- 48 z^2 +12$ has only real periodic points. The value of $H_4$ at its two local minima $-\sqrt{\frac{3}{2}}, \sqrt{\frac{3}{2}}$ is $-24$, its value at the local maximum $0$ is $12$ and the fixed points $x_0\approx -1.66327$, $x_1\approx 1.66327$ satisfy the assumptions of Corollary \ref{cor_even}.   
\end{example}

This example and some computer experiments motivate the following conjecture.
\begin{conjecture}\label{conj}
For any $d\geq 1$ the Hermite polynomial $H_d\in \K{R}[x]_d$ has only real periodic points, that is, $H_d\in \mathscr{R}_d$.
\end{conjecture}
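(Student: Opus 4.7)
My plan is to verify the hypotheses of Corollary~\ref{cor_odd} (for odd $d \geq 3$) or Corollary~\ref{cor_even} (for even $d \geq 2$) with $f = H_d$; the case $d = 1$ is trivial since $H_1(z) = 2z$ has $0$ as its only finite periodic point. Because $H_d'(z) = 2d\,H_{d-1}(z)$, the critical points of $H_d$ are precisely the simple real zeros $\xi_1 < \cdots < \xi_{d-1}$ of $H_{d-1}$. Evaluating the three-term recurrence $H_d(z) = 2zH_{d-1}(z) - 2(d-1)H_{d-2}(z)$ at a critical point gives $H_d(\xi_j) = -2(d-1)\,H_{d-2}(\xi_j)$, and the strict interlacing of the zeros of $H_{d-1}$ and $H_{d-2}$ forces the critical values of $H_d$ to alternate in sign, which matches the pattern of local maxima and minima required in the corollaries.

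The main technical input is a uniform lower bound on $|H_d(\xi_j)|$, for which I would introduce
\[
\phi(z) \;:=\; H_d(z)^2 + \frac{H_d'(z)^2}{2d}.
\]
Using the Hermite ODE $H_d''(z) - 2zH_d'(z) + 2d\,H_d(z) = 0$, a direct computation yields $\phi'(z) = \tfrac{2z}{d}\,H_d'(z)^2$. Thus $\phi$ is non-decreasing on $[0,\infty)$, non-increasing on $(-\infty,0]$, and even. Since $\phi(\xi_j) = H_d(\xi_j)^2$ at every critical point, this gives the uniform bound
\[
H_d(\xi_j)^2 \;\geq\; \phi(0) \;=\; \begin{cases} 2d\,H_{d-1}(0)^2 = 2d\bigl(\tfrac{(d-1)!}{((d-1)/2)!}\bigr)^2 & d\ \text{odd,}\\[3pt] H_d(0)^2 = \bigl(\tfrac{d!}{(d/2)!}\bigr)^2 & d\ \text{even,}\end{cases}
\]
so $\sqrt{\phi(0)}$ grows super-polynomially in $d$ by Stirling.

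The remaining step is to bound from above the fixed point $x_1 > \xi_{d-1}$ of $H_d$. For odd $d$ the odd symmetry of $H_d$ supplies the second required fixed point automatically as $x_0 = -x_1$, while for even $d$ Corollary~\ref{cor_even} only requires a value $x_0$ in the interval $\bigl[\max_i H_d(\xi_{2i-1}),\, \xi_1\bigr)$, which is non-empty thanks to the $\phi$-bound. Since $H_d$ is strictly increasing past its largest zero $\eta_d$, any $R$ with $R > \eta_d$ and $H_d(R) \geq R$ forces $x_1 \leq R$; together with the Szegő bound $\eta_d < \sqrt{2d+1}$ this leads to an expected estimate $x_1 \leq \sqrt{2d+1}\,(1 + o(1))$, which is vastly dominated by the factorial lower bound on $\sqrt{\phi(0)}$. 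For every $d$ beyond some explicit threshold the hypotheses of the corollaries then hold, and the remaining small cases ($d \leq 4$) are already covered in Example~\ref{Hermite} or by direct computation.

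The step I expect to be the main obstacle is making the upper bound on $x_1$ rigorous and uniform in $d$: since the coefficients of $H_d$ alternate in sign and cancel heavily, establishing even a natural-looking inequality such as $H_d(\sqrt{2d+1}) \geq \sqrt{2d+1}$ requires careful sign-sensitive estimates, and one may need to invoke Krasikov-type oscillatory bounds on Hermite polynomials or a direct asymptotic expansion via the Plancherel--Rotach formulas to close the argument uniformly.
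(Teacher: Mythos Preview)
The paper does not prove this statement: it is recorded as Conjecture~\ref{conj}, supported only by Example~\ref{Hermite} (the cases $d=3,4$) and numerical evidence. So there is no ``paper's own proof'' to compare against; your proposal is an attempt to resolve an open conjecture using the tools the paper develops.

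That said, your strategy is sound and the pieces you have already assembled are correct. The Sonin-type auxiliary function $\phi(z)=H_d(z)^2+H_d'(z)^2/(2d)$ really does satisfy $\phi'(z)=\tfrac{2z}{d}H_d'(z)^2$, so the uniform lower bound $|H_d(\xi_j)|\geq\sqrt{\phi(0)}$ on all critical values is valid, and the sign pattern you derive from the three-term recurrence matches the alternation of local maxima and minima required by Corollaries~\ref{cor_odd} and~\ref{cor_even}. The reduction to the odd/even corollaries, with $x_0=-x_1$ in the odd case by symmetry, is exactly the right shape.

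The one genuine gap you flag --- bounding the fixed point $x_1>\xi_{d-1}$ --- is actually much easier than you fear; no Plancherel--Rotach asymptotics or Krasikov bounds are needed. Write $H_d(z)=2^d\prod_{k=1}^d(z-\eta_k)$ with $\eta_1<\cdots<\eta_d$ the (real) zeros. For $z=\eta_d+1$ every factor satisfies $z-\eta_k\geq 1$, hence $H_d(\eta_d+1)\geq 2^d$. From Newton's identity the sum of the squares of the zeros of $H_d$ equals $d(d-1)/2$, so $\eta_d<d/\sqrt{2}$ and therefore $2^d>\eta_d+1$ for every $d\geq 2$. Thus $H_d(\eta_d+1)\geq \eta_d+1$, which forces a fixed point $x_1\in(\eta_d,\eta_d+1]$. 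Since $\eta_d+1<d/\sqrt{2}+1$ grows linearly while $\sqrt{\phi(0)}$ grows super-exponentially (already $\sqrt{\phi(0)}\geq 2$ for $d=2$ and $\sqrt{\phi(0)}\geq 2\sqrt{6}$ for $d=3$), the inequality $x_1\leq\sqrt{\phi(0)}$ follows for all $d\geq 2$ by a short induction or direct comparison. With this in hand, the hypotheses of Corollaries~\ref{cor_odd} and~\ref{cor_even} are verified for every $d\geq 2$, not merely beyond an unspecified threshold, and your outline becomes a complete proof of the conjecture.
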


A rational function $f\in \K{C}(x)_d$ is the same thing as a degree $d$ morphism of $\PP^1\simeq \overline{\K{C}}$ into itself. More generally, one can look at morphisms $f:\PP^n \rightarrow \PP^n$ of the projective $n$-space $\PP^n=(\K{C}^{n+1}\setminus\{0\})/\K{C}^*$ into itself. The study of dynamical properties of such higher-dimensional maps was pioneered by Fornaess and Sibony in \cite{FS}. As in the case $n=1$, any morphism $f:\PP^n\rightarrow \PP^n$ of degree $d\geq 2$ has infinitely many periodic points in $\PP^n$ \cite[Thm. $3.3$]{FS}. In \cite{ASS} Abo, Seigal and Sturmfels conjectured that for any $d\geq 2$ and $n\geq 2$ there exists a real morphism $f:\PP^n\rightarrow \PP^n$ of degree $d$ that has only real fixed points. In \cite{Kozhasov2018} the first author of the present work confirmed this conjecture. It is natural to ask whether the same is true for periodic points.
\begin{problem}\label{problem}
  Let $d\geq 2$ and $n\geq 1$. Does there exist a morphism $f:\PP^n\rightarrow \PP^n$ of degree $d$ that is defined over $\K{R}$ and has only real periodic points?  
\end{problem}
Examples \ref{RF} and \ref{Cheb} imply that the answer to this question is positive for $n=1$. Note that for $n\geq 2$ the only real fibered morphisms $f:\PP^n\rightarrow \PP^n$, i.e., those that satisfy $f^{-1}(x)\subset \PP^{n}(\K{R})$ for any $x\in \PP^{n}(\K{R})$, are projective linear transformations \cite[Cor. 2.20]{realfib}. Therefore, no extension of the construction from Example \ref{RF} to $n\geq 2$ can exist. Also, all morphisms $f: \PP^n\rightarrow \PP^n$ with only real fixed points from \cite{Kozhasov2018} that we tested turn out to have many non-real periodic points.


\section*{Higher genus}
Let $X$ be a smooth irreducible projective real algebraic curve of genus $g>0$. We conclude with noting that the question of which morphisms $X\to X$ have only real periodic points is rather simple in this situation.

\begin{propo}\label{g=1}
 Let $g=1$ and assume that the real, nonconstant morphism $f:X\to X$ has no nonreal periodic points. Then $f$ has no periodic points.
\end{propo}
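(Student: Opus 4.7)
The plan is to classify real nonconstant self-morphisms of the genus one curve and then analyse their periodic points case by case.

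If $X(\K{R})=\emptyset$ the statement is immediate, so assume that there is a real point and take one as the origin $O$ of the group law, turning $X$ into a real elliptic curve. Every morphism $f\colon X\to X$ decomposes as $f(P)=\phi(P)+Q$ with $\phi:=f-f(O)$ an isogeny fixing $O$ and $Q:=f(O)\in X(\K{R})$; the reality of $f$ and $O$ forces $\phi$ to commute with complex conjugation. Now $\mathrm{End}(X_{\K{C}})$ is either $\K{Z}$ or an order in an imaginary quadratic field $K$, and complex conjugation, being antiholomorphic, acts on $\mathrm{End}(X_{\K{C}})$ as the nontrivial Galois involution of $K$; its fixed subring is $\K{Z}$ in both cases. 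Hence $\phi=[n]$ for some nonzero $n\in\K{Z}$.

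I then distinguish cases on $n$. For $n=-1$ a direct computation gives $f\circ f=\mathrm{id}$, so every point of $X$ is $2$-periodic and in particular the nonreal ones are, contradicting the hypothesis. For $n=1$ the map $f$ is translation by $Q$, and $f^k=\mathrm{id}$ if and only if $kQ=0$: if $Q$ is torsion every point is periodic (again producing nonreal periodic points), while if $Q$ is non-torsion then $f$ has no periodic points at all, which is the desired conclusion. For $|n|\geq 2$ and any $k\geq 1$, the isogeny $[n^k-1]$ is nonzero, hence surjective, so the equation $f^k(P)=P$ is solvable and its solution set is a coset of $\ker([n^k-1])=X[n^k-1]\cong(\K{Z}/(n^k-1))^2$. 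All such solutions being real would force $X[n^k-1]\subset X(\K{R})$.

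The geometric input that closes the $|n|\geq 2$ case is that $X(\K{R})$ is a compact one-dimensional commutative real Lie group, hence isomorphic to $S^1$ or $S^1\times\K{Z}/2$, and every finite subgroup of such a group is cyclic or cyclic times $\K{Z}/2$. In particular $(\K{Z}/m)^2$ does not embed into $X(\K{R})$ whenever $m\geq 3$, while $n^k-1\geq 3$ as soon as $k\geq 2$, producing the required nonreal periodic point of $f$. The main obstacle I expect is the identification of the real endomorphism ring with $\K{Z}$; in the CM case this requires tracking the action of complex conjugation on the $\mathcal{O}_K$-action (for instance on the tangent space at the origin) and verifying that it matches the Galois involution of $K$. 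Once this is in place, the rest of the argument depends only on surjectivity of nonzero isogenies and on the structure of finite subgroups of a one-dimensional compact abelian Lie group.
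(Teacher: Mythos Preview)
Your argument is correct and takes a genuinely different route from the paper's. The paper uniformizes $X=\K{C}/\Lambda$ and splits by degree: for $\deg f\geq 2$ it cites that the Julia set of a self-map of a complex torus is all of $X$ \cite[Thm.~6.1]{milnor}, so repelling periodic points are dense and cannot all be real; for $\deg f=1$ it notes that a real automorphism has the form $z\mapsto \pm z+c$ and handles $\alpha=1$ directly (iterates are translations) and $\alpha=-1$ by passing to $f^2$.

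You replace the dynamical input by pure algebra: identifying the real endomorphism ring with $\K{Z}$ (via the antiholomorphic action of conjugation on the tangent space) gives $f(P)=[n]P+Q$ with $n\in\K{Z}$ and $Q\in X(\K{R})$, and for $|n|\geq2$ the fixed locus of $f^k$ is a coset of $X[n^k-1]\cong(\K{Z}/(n^k-1))^2$, which cannot embed in the one-dimensional compact abelian Lie group $X(\K{R})$ once $n^k-1\geq3$. This avoids complex dynamics entirely and makes the obstruction (too much torsion to be real) explicit; the paper's version is shorter because one citation disposes of the whole $|n|\geq2$ range. Your $n=\pm1$ cases coincide with the paper's degree-one analysis.

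One cosmetic remark: your inequality ``$n^k-1\geq3$ as soon as $k\geq2$'' is fine at $k=2$ since $n^2\geq4$, but for odd $k$ and negative $n$ the quantity is negative; since a single $k$ suffices, simply fix $k=2$.
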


\begin{proof}
 We can write $X$ as $\K{C}/\Lambda$ where $\Lambda$ is the lattice in $\K{C}$ that is generated by the two numbers $1$ and $\tau$ where $\tau$ is in the upper open half-plane. Without loss of generality we can further assume that the real part $X(\K{R})$ of $X$ is either empty, or $\K{R}/\Lambda$, or $(\K{R}\cup (\frac{1}{2}\tau+\K{R}))/\Lambda$.
 By \cite[Thm.~6.1]{milnor} the Julia set of $f$ is all of $X$ when the degree of $f$ is larger than $1$. So let us assume that the degree of $f$ is $1$. Since $f$ maps real points to real points, we can write $f$ as $$\K{C}/\Lambda\to \K{C}/\Lambda,\quad z\mapsto \alpha z+c,$$where $\alpha\in\{\pm1\}$. We first consider the case $\alpha=1$. Then $f^k(z)=z+k\cdot c$ which is either the identity map or has no fixed points. Thus either every point of $X$ is a periodic point or $f$ has no periodic points. If $\alpha=-1$, then we can apply the preceding argument to $f^2$.
\end{proof}

The situation is even easier for $g>1$. By the Riemann--Hurwitz formula \cite[Cor.~2.4]{Hart77} every nonconstant map $f:X\to X$ is an automorphism. But since by Hurwitz's theorem there are only finitely many automorphisms of $X$ \cite{hurwitz1}, we have that $f^k$ is the identity for large enough $k$. Therefore, we obtain the following proposition.

\begin{propo}\label{g>1}
  Let $g>1$. Then for a morphism $f: X\to X$ any point of $X$ is periodic.
\end{propo}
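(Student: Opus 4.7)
My plan is to reproduce the argument already sketched in the paragraph preceding the proposition, making the three ingredients explicit. The proposition is implicitly about nonconstant morphisms (a constant morphism $f\equiv c$ has only $c$ itself as a periodic point, so the statement is false in that case; the preceding paragraph also restricts to nonconstant $f$), so I would open the proof by reducing to that setting.

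The first step is to apply the Riemann--Hurwitz formula to a nonconstant $f:X\to X$. Writing $2g-2=\deg(f)(2g-2)+\sum_{P}(e_P-1)$ and using $g>1$ so that $2g-2>0$, the right-hand side is at least $\deg(f)(2g-2)$, which already exceeds the left-hand side whenever $\deg(f)\geq 2$. Hence $\deg(f)=1$ and $f$ is an automorphism of $X$. This is precisely what the author cites via \cite[Cor.~2.4]{Hart77}.

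The second step is to invoke Hurwitz's automorphism theorem, which guarantees that $\mathrm{Aut}(X)$ is finite (with the classical bound $84(g-1)$, though only finiteness is needed). Consequently the cyclic subgroup $\langle f\rangle\subset\mathrm{Aut}(X)$ is finite, so there exist positive integers $k<\ell$ with $f^{k}=f^{\ell}$. Composing with $f^{-k}$, which exists because $f$ is an automorphism, yields $f^{\ell-k}=\mathrm{id}_X$. Then every point $x\in X$ satisfies $f^{\ell-k}(x)=x$ and is thus periodic of period dividing $\ell-k$.

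I do not expect any serious obstacle: both Riemann--Hurwitz and Hurwitz's theorem are standard and have already been pointed to in the text. The only care required is the pedantic one of noting that the argument uses the invertibility of $f$, which is exactly what the first step supplies, and acknowledging (or excluding) the constant case at the outset so that the quoted statement is literally correct as written.
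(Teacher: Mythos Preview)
Your proposal is correct and follows exactly the argument the paper gives in the paragraph preceding the proposition: Riemann--Hurwitz forces a nonconstant $f$ to be an automorphism, and finiteness of $\mathrm{Aut}(X)$ via Hurwitz's theorem yields $f^k=\mathrm{id}_X$ for some $k$. Your additional remarks (the explicit inequality from Riemann--Hurwitz, the need to exclude the constant case, and the use of invertibility to pass from $f^k=f^\ell$ to $f^{\ell-k}=\mathrm{id}_X$) are welcome clarifications but do not change the approach.
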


\section{Proofs of main results}

To prove Theorem \ref{thm:main} we need a couple of auxiliary facts that we prove first.
\begin{lemma}\label{lem:oneattr}
 Let $f\in\K{R}(z)_d$, $d\geq2$, be a rational function such that $J(f)\subsetneq\overline{\K{R}}$. Then $f$ has at most one attracting cycle. If further $J(f)$ is not connected, then all periodic points are real.
\end{lemma}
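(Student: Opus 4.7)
The plan is to analyze the Fatou set's topology and exploit real symmetry. Since $J(f)\subset\overline{\K{R}}$, the open set $\overline{\K{C}}\setminus\overline{\K{R}}$ lies in the Fatou set, and its two connected components are the upper and lower open half-planes $U_+$ and $U_-$. Let $U$ be the Fatou component containing $U_+$; since $f$ is real, the Fatou component containing $U_-$ is $\overline{U}$. Any further Fatou component would be an open subset of $\overline{\K{R}}$, which is impossible because $\overline{\K{R}}$ has empty interior in $\overline{\K{C}}$. If $U\neq\overline{U}$, then $U$ would contain no real points (any real point is its own conjugate and would thus lie in both $U$ and $\overline{U}$), forcing $U=U_+$, $\overline{U}=U_-$, and $J(f)=\overline{\K{R}}$, against the hypothesis. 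Hence the Fatou set is a single invariant component $U=\overline{U}$.

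The first assertion follows at once: the immediate basin of an attracting cycle of period $k$ consists of $k$ distinct Fatou components cycled by $f$, but our Fatou set is a single fixed component. Hence any attracting cycle is a fixed point of $f$ in $U$, and since $U$ contains at most one attracting fixed point, there is at most one attracting cycle.

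For the second assertion, assume in addition that $J(f)$ is disconnected. Periodic points in $J(f)\subset\overline{\K{R}}$ are automatically real, so it suffices to treat periodic points inside $U$. By the classification of periodic Fatou components, only attracting basins and Siegel disks contain periodic points in their interior (parabolic basins and Herman rings do not). If $z_0\in U$ is an attracting periodic point, it must be a fixed point by the first part; its conjugate $\overline{z_0}$ is then also an attracting fixed point in $U=\overline{U}$, and uniqueness forces $z_0=\overline{z_0}\in\K{R}$. To rule out Siegel centers, observe that a Siegel disk is simply connected; if one existed it would coincide with $U$, and then $J(f)=\overline{\K{C}}\setminus U$ would be connected, contradicting our assumption. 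Hence all periodic points are real.

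The main obstacle is to correctly invoke the classification of periodic Fatou components---in particular to pin down which types admit interior periodic points---and to exploit the topological fact that the complement in $\overline{\K{C}}$ of a simply connected open set is connected, which is precisely what forbids Siegel disks in our disconnected-Julia setting.
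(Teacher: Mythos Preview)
Your proof is correct and follows essentially the same route as the paper's: both establish that $F(f)$ is a single (conjugation-invariant) Fatou component, deduce the uniqueness of an attracting cycle from this, and then rule out a nonreal periodic point in $F(f)$ by observing that the non-attracting case forces $U$ to be a Siegel disk, hence simply connected, contradicting the disconnectedness of $J(f)$. The only cosmetic difference is that the paper passes to an iterate so that the hypothetical nonreal periodic point becomes a fixed point and then cites \cite[Lem.~11.1]{milnor} directly, whereas you stay with $f$ and invoke the full classification of fixed Fatou components; the content is the same.
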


\begin{proof}
 The assumption implies that the Fatou set $F(f)=\overline{\K{C}}\setminus J(f)$ of $f$ is connected. Thus the basin of attraction of any attracting cycle of $f$ is all of $F(f)$ \cite[Cor.~4.12]{milnor}. In particular, there can be at most one attracting cycle of $f$. Now assume that there is a nonreal periodic point $z_0$, i.e., $f^k(z_0)=z_0$ for some $k\in\K{N}$. Since $J(f^k)=J(f)$ by \cite[Lem.~4.3]{milnor}, we can replace $f$ by $f^k$ and assume that $z_0$ is a nonreal fixed point of $f$. By assumption $z_0$ lies in the Fatou set of $f$. If $z_0$ was attracting, then its complex conjugate $\overline{z_0}$ would be an attracting fixed point as well which contradicts our first statement. Thus $|f'(z_0)|=1$ and by \cite[Lem.~11.1]{milnor} we have that $F(f)$ is conformally isomorphic to the unit disc. But this is a contradiction since $J(f)$ not being connected implies that $F(f)$ is not simply connected.
\end{proof}

\begin{lemma}\label{lem:blaschke}
 Let $f\in\K{R}(z)_d$ with $d\geq2$ be real fibered and assume that $f$ has a nonrepelling real cycle of length at most $2$. Then $f$ has only real periodic points.
\end{lemma}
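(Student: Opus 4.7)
The plan is to apply the Denjoy--Wolff theorem to the action of $f$ on the open upper half-plane $\K{H}^+=\{z\in\K{C}:\Im(z)>0\}$. Real fiberedness gives $f^{-1}(\overline{\K{R}})=\overline{\K{R}}$, and connectedness forces $f$ either to preserve or swap $\K{H}^+$ and $\K{H}^-=\{z\in\K{C}:\Im(z)<0\}$; let $m\in\{1,2\}$ be minimal with $f^m(\K{H}^+)\subseteq\K{H}^+$, so that $g:=f^m|_{\K{H}^+}$ is a proper holomorphic self-map of $\K{H}^+$ of degree $d^m\geq 4$. Since $g$ is not an automorphism, Denjoy--Wolff yields a dichotomy: either (A) $g$ has an interior fixed point $z_0\in\K{H}^+$, which is attracting by the Schwarz lemma, or (B) $g$ has no interior fixed point and its iterates converge, uniformly on compact subsets, to a unique point $\zeta\in\overline{\K{R}}$ that is a nonrepelling fixed point of $f^m$---a nonrepelling periodic point of $f$ of period dividing $m\leq 2$.

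The crux of the proof is to rule out case~(A) under the hypothesis. There I would first establish $F(f)=\K{H}^+\cup\K{H}^-$: the Schwarz--Pick contraction gives $\K{H}^+\subseteq F(f)$, and if a Fatou component $U$ strictly contained $\K{H}^+$, then for any $p\in U\cap\overline{\K{R}}$ a convergent subsequence $f^{n_k}\to h$ on $U$ would equal a constant ($z_0$ or $f(z_0)$) on $\K{H}^+\subseteq U$, hence on all of $U$ by the identity principle, yielding $f^{n_k}(p)\to z_0\notin\overline{\K{R}}$ while $f^{n_k}(p)\in\overline{\K{R}}$, a contradiction; the conjugate argument handles $\K{H}^-$, and any further Fatou component would have to fit into $\overline{\K{R}}$, which has empty interior in $\overline{\K{C}}$.

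Now any nonrepelling real periodic point has real multiplier, so is either attracting or parabolic. Attracting is impossible as $F(f)\cap\overline{\K{R}}=\emptyset$; parabolic with multiplier $\pm 1$ is impossible because its attracting petals would be open subsets of $F(f)=\K{H}^+\cup\K{H}^-$, where iterates converge to the non-real attractors $z_0,\overline{z_0}$ rather than to a real parabolic point. This contradicts the hypothesis and forces us into case~(B). There, if $p\in\K{H}^+$ were periodic for $f$ of period $k$, then $p$ would be an interior fixed point of $g^k$, whose $g$-orbit would be a bounded periodic sequence in $\K{H}^+$ that cannot converge to $\zeta\in\overline{\K{R}}$, contradicting Denjoy--Wolff; conjugation excludes periodic points in $\K{H}^-$, so every periodic point of $f$ lies in $\overline{\K{R}}$.

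The main obstacle is the structural identification $F(f)=\K{H}^+\cup\K{H}^-$ in case~(A), together with the case analysis needed to discard each flavour of nonrepelling real periodic point: the real-multiplier observation automatically removes the Siegel and Cremer possibilities, while the incompatibility of parabolic petal dynamics with the attracting dynamics on $\K{H}^{\pm}$ removes the parabolic case.
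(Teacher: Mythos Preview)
Your argument is correct (modulo one harmless slip: $d^m\geq 4$ should read $d^m\geq d\geq 2$; all you need is that $g$ is not a M\"obius automorphism, which degree $\geq 2$ guarantees). The Denjoy--Wolff dichotomy is set up cleanly, the exclusion of case~(A) via $F(f)=\K{H}^+\cup\K{H}^-$ and the attracting/parabolic case split is sound, and the case~(B) conclusion is immediate.

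The paper takes a much shorter route: it simply observes that for every $k$ the map $g=f^{2k}$ sends $\K{H}^+$ to itself and inherits a nonrepelling real fixed point from the hypothesised cycle, then invokes Fatou's 1919 analysis of such maps (\S16 of \emph{Sur les \'equations fonctionnelles}) to conclude that each $g$ has only real fixed points. In effect the paper outsources exactly the dichotomy you prove by hand: Fatou's \S16 already contains the statement that a degree $\geq 2$ self-map of $\K{H}^+$ with a nonrepelling real fixed point cannot have an interior fixed point, which is precisely your ``rule out (A)'' step. What your approach buys is self-containment and transparency for a modern reader---Denjoy--Wolff, Schwarz--Pick, and the Leau--Fatou flower are standard tools, whereas extracting the needed implication from Fatou's original paper requires some archaeology. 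What the paper's approach buys is brevity: three lines instead of three paragraphs, at the cost of a black-box citation.
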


\begin{proof}
 It suffices to show that for all $k\in\K{N}$ the function $g=f^{2k}$ has only real fixed points. By assumption $g$ maps the upper half-plane to itself and has a nonrepelling fixed point. Thus by \cite[\S16]{fatou} it has only real periodic points.
\end{proof}

We are now ready to prove Theorems \ref{thm:main} and \ref{thm:RF}.
\subsection{Proof of Theorem \ref{thm:main}}
 First assume $(1)$. Then by \cite[Thm. 14.1]{milnor} the Julia set $J(f)$ is real and we are in one of the cases of \Cref{thm:realjulia}. Assume we are in case $(i)$ and let $S=\overline{\K{R}}$. It is clear that $f^{-1}(S)\subset S$. If $f$ permutes upper and lower half-plane, then $f$ has at least $d$ real repelling fixed points by \cite[\S17]{fatou}, so $S$ contains a nonattracting fixed point. Furthermore, $f^2$ maps the upper half-plane to itself and thus has a nonrepelling real fixed point by \cite[\S16]{fatou}. If $f$ itself maps the upper half-plane to itself, it has a nonrepelling real fixed point for the same reason. If this fixed point is also nonattracting, we are done. If it is attracting, then there are additional $d$ repelling fixed points, again by \cite[\S16]{fatou}.
 
 If we are not in case $(i)$, then $J(f)\subsetneq\overline{\K{R}}$ so the Fatou set $F(f)$ is connected. The existence of a nonrepelling real fixed point is part of the statement of \Cref{thm:realjulia}. If this fixed point is attracting, then there is at least one other fixed point. This cannot be attracting by \Cref{lem:oneattr}. Thus $f$ has at least one real fixed point that is nonattracting. This is either a repelling or a parabolic fixed point (since real) and thus belongs to $J(f)$ by \cite[Lem.~4.6, 4.7]{milnor}. It was shown in \cite[p.~6454]{realmult} that there is a finite union of closed intervals $S\subset\overline{\K{R}}$ that contains $J(f)$ (and therefore a nonattracting fixed point) and further satisfies $f^{-1}(S)\subset S$.
 
Now assume $(2)$. Let $z_0\in S$ be a nonattracting fixed point. Since $z_0$ is real, it is either repelling or parabolic and thus belongs to $J(f)$ by \cite[Lem.~4.6, 4.7]{milnor}. By \cite[Cor.~4.13]{milnor} its iterated preimages are dense in $J(f)$. This implies that $J(f)$ is contained in $S$. If $J(f)=\overline{\K{R}}$, then $f$ is real fibered and by \Cref{lem:blaschke} $f$ has only real periodic points. If $J(f)\subsetneq\overline{\K{R}}$ and $J(f)$ is not connected, then $f$ has only real periodic points \Cref{lem:oneattr}. If $J(f)\subsetneq\overline{\K{R}}$ is connected, then it is a closed interval $I$. After conjugating with a M\"obius transformation, we can assume that $I=[0,\infty)$. Then according to \cite[\S25]{fatou} $f$ can be written as $$f(z)=z\cdot \left( c-\sum_{i=1}^m\frac{a_i}{z-b_i}\right)^2$$ for some nonnegative real numbers $a_i,b_i$ and $c$. A direct computation shows that such $f$ has only real fixed points. The same argument for $f^k$ shows that $f$ has only real periodic points.
 
 The additional statement follows because in the direction $(2)\Rightarrow(1)$ we have seen that such $S$ contains $J(f)$. Since all nonattracting periodic points are real and therefore repelling or parabolic, these are contained in $J(f)$ and thus in $S$.
\subsection{Proof of Theorem \ref{thm:RF}}
The only if direction is obvious. Observe that the real fibered function $g=f^2\in \K{R}(z)_{2d}$ maps the upper half-plane to itself. As $g$ has only real fixed points, \cite[\S16]{fatou} implies that it has a nonrepelling fixed point $z_0\in \overline{\K{R}}$. The $k$th iterate $g^{k}\in \K{R}(z)_{d^{2k}}$ of $g$ maps the upper half-plane to itself and $z_0$ is a nonrepelling fixed point also for $g^k$. Thus, again by \cite[\S16]{fatou}, $g^k$ has only real fixed points. If $f^{2\ell+1}$ had a nonreal fixed point, then so would also do $f^{2(2\ell+1)} = g^{2\ell+1}$.

  The condition for $f^2$ to have only real fixed points is a closed semialgebraic condition. By \cite[\S16]{fatou} such a function has a nonrepelling fixed point $z_0\in \overline{\K{R}}$. If $z_0$ is attracting, then $f^2$ has $d^2+1$ distinct real fixed points \cite[\S16]{fatou}. Any $h\in \K{R}(z)_d$ in a sufficiently small neighbourhood of $f$ is real fibered and $h^2$ has only real fixed points. Hence $f$ with an attracting periodic point lies in the interior of $\mathscr{R}_d$. If $z_0$ is an indifferent fixed point of $f^2$, then $z_0=f(z_0)$ must already be a fixed point of $f$. Indeed, if it was not the case, then  $z_0, f(z_0)\in \overline{\K{R}}$ would be two distinct indifferent fixed points of $f^2$. But \cite[\S16]{fatou} implies that real fibered rational functions can have at most one indifferent real fixed point.    
  We now show that the boundary of $\mathscr{R}_d$ consists precisely of functions $f$ with an indifferent fixed point. For this let us first recall the following fact: 
  if $\phi: z\mapsto \frac{az+b}{cz+d}$, $ad-bc=1$, is a M\"obius transformation, then $\phi(z_0)$ is a periodic point for $f^\phi=\phi\circ f\circ\phi^{-1}$ if and only if $z_0$ is a periodic point for $f$. Moreover, the multiplier of $f^\phi$ at $\phi(z_0)$ equals the multiplier of $f$ at $z_0$. Therefore, after a conjugation of $f$ by a real M\"obius transformation $\phi$, we can assume that $f$ has an indifferent fixed point at $0$. The fixed points of $f=\frac{p}{q}$ are  the roots of the polynomial $F(z)=p(z)-zq(z)$ with the convention that $F(\infty)=0$ when $\deg(p)>\deg(q)$. In particular, all roots of $F$ are real and $0$ is the only multiple root of $F$ (otherwise, $f$ has more than one indifferent fixed point). We consider two cases.
  
  $i)$ The function $f$ maps the upper-half plane to itself. Then $f^\prime (0)=1$ and $0$ is a root of $F$ of multiplicity $2$ or $3$ (according to \cite[\S 16]{fatou} $0$ cannot have higher multiplicity). For all sufficiently small $\varepsilon \in \K{R}$ the function $f_\varepsilon = \frac{p_\varepsilon}{q}$, $p_\varepsilon = p+\varepsilon$, is real fibered and its fixed points are roots of $F_\varepsilon(z)=p_\varepsilon(z)-zq(z)= F(z)+\varepsilon$. If $0$ is a root of $F$ of multiplicity $2$ and $F^{\prime\prime}(0)>0$ (resp. $F^{\prime\prime}(0)<0$), then for all small $\varepsilon >0$ (resp. $\varepsilon<0$) the polynomial $F_\varepsilon$ has only $d-1<\deg(F)$ real roots. If $0$ is a root of $F$ of multiplicity $3$, for all small $\varepsilon$ the polynomial $F_\varepsilon$ has $d-1<\deg(F)$ real roots. Therefore, in both cases, an arbitrarily close function $f_\varepsilon$ to $f$ has nonreal fixed points.
  
  $ii)$ The function $f$ permutes upper and lower-half plane. Then $f^\prime(0)=-1$ and the second iterate $g=f^2=\frac{P}{Q}$ satisfies $g(0)=0$, $g^\prime(0)=(f^\prime(0))^2=1$ and $g^{\prime\prime}(0)=f^{\prime\prime}(0)(f^\prime(0))^2+f^{\prime\prime}(0)f^\prime(0)=0$. A direct computation shows that $0$ is a root of $G=P-zQ$ of multiplicity at least $3$, i.e., $G(0)=P(0)=Q(0)g(0)=0$, $G^\prime(0)= P^\prime(0)-Q(0)=Q(0)(g^\prime(0)-1)=0$ and $G^{\prime\prime}(0) =P^{\prime\prime}(0)-2Q^\prime(0)= Q(0)g^{\prime\prime}(0) =0$. By \cite[\S 16]{fatou} $0$ then has multiplicity exactly $3$. The remaining $d^2-2$ fixed points of $g$ are repelling again by \cite[\S 16]{fatou}. 
  Note that there exists $\delta>0$ such that $f$ is strictly decreasing on $(-\delta,\delta)$. Consider a family of real fibered rational functions $f_\varepsilon=(1+\varepsilon)f$, $\varepsilon\geq 0$, with $f_0=f$ and fix positive $\varepsilon_0$ and $\delta_0<\delta$ such that $-\delta<(1+\varepsilon_0)f(z)<\delta$ for $z\in (-\delta_0,\delta_0)$. Since $f$ decreases on $(-\delta_0,\delta_0)$ and since $f(0)=0$, we have for any $\varepsilon\in (0,\varepsilon_0)$ that $(1+\varepsilon)f(z)>f(z)$ when $z\in (-\delta_0,0)$, $(1+\varepsilon)f(z)<f(z)$ when $z\in (0,\delta_0)$ and hence
  \begin{align*}
    g_\varepsilon(z)=f_\varepsilon^2(z) = (1+\varepsilon)f((1+\varepsilon)f(z))
    \begin{cases}
   &\hspace{-0.2cm} \leq (1+\varepsilon)f(f(z))=(1+\varepsilon)g,\quad \textrm{if}\ z\in(-\delta_0,0),\\  
    & \hspace{-0.2cm} \geq (1+\varepsilon)f(f(z)) = (1+\varepsilon)g,\quad \textrm{if}\ z\in(0,\delta_0).
\end{cases}
  \end{align*}
These inequalities mean that the graph of $g_\varepsilon$ over $(-\delta_0,0)$ (resp. $(0,\delta_0)$) lies strictly below (resp. above) the graph of $g$. In particular, $0$ is the only fixed point of $g_\varepsilon$, $\varepsilon\in (0,\varepsilon_0)$, in the interval $(-\delta_0,\delta_0)$, and this point is repelling since $g_\varepsilon^\prime(0) = (1+\varepsilon)^2(f^\prime(0))^2=(1+\varepsilon)^2>1$. Thus, for all sufficiently small $\varepsilon>0$ the function $g_\varepsilon$ has non-real fixed points as there are only $d^2-2$ fixed points outside $(-\delta_0,\delta_0)$. 

\section*{Acknowledgements}
We would like to thank Vladlen Timorin for useful comments and for bringing our attention to the work of Eremenko and van Strien.

\newcommand{\etalchar}[1]{$^{#1}$}
\def\cprime{$'$}

 \end{document}